\newtheorem{theorem}{Theorem}[section]
\newtheorem{proposition}[theorem]{Proposition}
\theoremstyle{definition}
\newtheorem{remark}[theorem]{Remark}
\newcommand{\CC}{{\mathbb C}}
\newcommand{\NN}{{\mathbb N}}
\newcommand{\cB}{{\mathcal B}}
\newcommand{\cE}{{\mathcal E}}
\newcommand{\ra}{\rightarrow}
\newcommand{\tr}{\operatorname{tr}}
\let\phi=\varphi
\title{Interpolation for completely positive maps:
numerical solutions}
\author{C\u alin-Grigore Ambrozie}\thanks{The first named author's research 
was supported by the grants RVO:67985840 and 
CNCSIS UEFISCDI PN-II-ID-PCE-2011-3-0119}
\address{Institute of Mathematics - Romanian Academy, PO Box 1-764, RO 014700 
Bucharest, Romania \emph{and} 
Institute of Mathematics of the Czech Academy, Zitna 25, 11567 Prague 1, Czech 
Republic}
\email{calin.ambrozie@imar.ro}
\author{Aurelian Gheondea}\thanks{The second named author's research 
supported by a grant of the Romanian 
National Authority for Scientific Research, CNCSIS UEFISCDI, project number
PN-II-ID-PCE-2011-3-0119.}
\address{Department of Mathematics, Bilkent University, 06800 Bilkent, Ankara, 
Turkey, \emph{and} Institutul de Matematic\u a al Academiei Rom\^ane, C.P.\ 
1-764, 014700 Bucure\c sti, Rom\^ania} 
\email{aurelian@fen.bilkent.edu.tr \textrm{and} A.Gheondea@imar.ro} 
\keywords{Completely positive, interpolation,
Choi matrix, quantum channel, semidefinite programming, convex minimization}
\subjclass[2010]{15B48, 15A72, 65K15, 81P45}
\begin{document}
\maketitle

\begin{abstract}
We present certain techniques to find completely positive maps between matrix 
algebras that take prescribed values on  given data.
To this aim we describe a semidefinite programming approach and another convex  
minimization method supported by a numerical example.
\end{abstract}

\section{Introduction}

The present paper refers to a certain interpolation problem for completely positive 
maps  that take prescribed values on given matrices, closely related to 
problems recently considered by C.-K.~Li and Y.-T. Poon in \cite{LiPoon}, Z.~Huang,  
C.-K.~Li, E.~Poon,  and N.-S.~Sze in \cite{HLPZ},  T.~Heinosaari, 
M.A.~Jivulescu, D.~Reeb,  and M.M.~Wolf   in \cite{hei}
as well as  G.M.~D'Ariano and P.~Lo Presti \cite{DArianoLoPresti}, 
D.S. Gon\c calves et al.~\cite{Goncalves}. 

Let $M_n$ denote the $C^*$-algebra 
of all $n\times n$ complex matrices. In particular, positive elements
(positive semidefinite matrices) in $M_n$ are defined. Recall that
a matrix $A\in M_n$ is \emph{positive semidefinite} if all its 
principal determinants are nonnegative. Let $M_{n}^+ \, \subset M_n$ denote the convex cone of all such matrices.
A linear map $\phi\colon M_n\ra M_k$ is \emph{positive} if $\varphi (M_{n}^+ )\subset M_{n}^+$, namely it maps positive semidefinite matrices into positive semidefinite ones.
We call $\varphi$  \emph{completely positive} if
the 
map $I_m\otimes \phi\colon M_m\otimes M_n\ra M_m\otimes M_k$ is 
positive for all $m\in \NN$. 
 
An equivalent notion is that of 
\emph{positive semidefinite} map, 
that is, for all $m\in\NN$, all $h_1,\ldots,h_m\in \CC^k$ and all 
$A_1,\ldots,A_m\in M_n$ we have
$\sum_{i,j=1}^m \langle \phi(A_j^*A_i)h_j,h_i\rangle  \geq 0$.
Let $\mathrm{CP}
(M_n,M_k)$ 
 denote the convex cone of all completely positive maps $\phi\colon M_n\ra M_k$. 
If $\phi:M_n\rightarrow M_k$ is completely 
positive then, cf.\ K.~Kraus \cite{Kra71} and M.D.~Choi \cite{Choi}, 
there are $n\times k$ matrices 
$V_1,V_2,\ldots,V_m$ with $m\leq nk$ such that
\begin{equation}\label{e:kraus}
\phi(A)=V_1^*AV_1+V_2^*AV_2+\cdots +V_m^*AV_m \mbox{ for all }
A\in M_n
\end{equation}
(and, of course, any map of the form (\ref{e:kraus}) is completely positive).
The representation \eqref{e:kraus} is called the \emph{Kraus representation} of 
$\phi$ and  $V_1,\dots,V_m$ are called the \emph{operation elements}. 
The representation \eqref{e:kraus} of a given completely positive 
map $\phi$ is  non-unique, with respect to both its operation 
elements and the number $m$ of these elements.
However the minimal number of the operation elements in the Kraus 
form of such a map $\varphi$ turns to be the rank of its Choi matrix (see subsection \ref{ss:tcm})  
- the statement is implicit in the original article of M.D.~Choi 
\cite{Choi}.
 The following problem has been
suggested by C.-K.~Li and Y.-T. Poon in \cite{LiPoon}, 
where a solution was given  in case when the  
families of matrices $(A_\nu )_\nu$, $(B_\nu )_\nu$ from below are commutative.

\noindent\textbf{Problem A.} \emph{
Given matrices $A_\nu \in M_n$ and $B_\nu \in M_k$ for $\nu =1,\ldots ,N$, 
find
$\phi \in\mathrm{CP}(M_n ,M_k)$ subject to the conditions}
\begin{equation}\label{e:princ}\varphi(A_\nu) =B_\nu,\mbox{ \it for all }\nu=1,\ldots,N.
\end{equation}
Other linear affine restrictions on $\varphi$ may be added as well, 
like trace preserving etc.
In \cite{prima} we dealt with various necessary and/or sufficient  conditions for the 
existence of such solutions $\phi$.
Most of the important theoretic results in this sense are related to  Arveson's 
Hahn-Banach type theorem \cite{Arveson} and
various techniques of operator spaces  \cite{Pau},
some of which being  simplified in the present particular context  by R.R.~Smith 
and J.D.~Ward \cite{SmithWard}. In this  paper we present some concrete techniques  to  compute  solutions numerically.
\vspace{2 mm}

Briefly speaking, the existence of solutions to Problem A (or related ones)  
always turns to be equivalent to the fact that
 certain affine subspaces of matrices contain at least 
one positive semidefinite matrix; also, this can be characterized  by the positivity 
of certain related linear functionals. 
In particular, our Theorem 2.4 and Theorem 2.5 in  
\cite{prima} show such  characterizations  
in terms of a certain density matrix $D_\varphi$ of $\varphi$, see Section 2.4 in 
\cite{prima}.
The density matrix  $D_\varphi$ and the Choi matrix
 $\Phi_\varphi$ are related by the equality 
 $D_\varphi =U^*\overline{\Phi}_\varphi U$ where the symbol
 $\overline{\mbox{ }^{\mbox{ }}}$ denotes the complex conjugation 
and $U$ is a  unitary operator  coming from the two canonical identifications 
of  $\mathbb{C}^n \otimes \mathbb{C}^k$ with $\mathbb{C}^{nk}$, see 
Proposition 2.8 in \cite{prima}; 
in this article we chose to use the Choi matrix $\Phi_\varphi$ instead. 
The first step in our approach is to firstly derive an equivalent formulation in 
terms of existence of certain positive 
semidefinite matrices subject to linear affine restrictions, like the matrix $X$  
($ = \Phi_\varphi$) in Problem B from Subsection~\ref{ss:esp}.

In Subsection~\ref{ss:smsp} we briefly describe a method for solving Problem B
by known techniques of semidefinite programming. 
Further, by using results from
\cite{ca}, we describe methods for solving Problem B by convex 
minimization techniques, see Theorem~\ref{minimizare}. A numerical example that
illustrates Theorem~\ref{minimizare} is performed in Subsection~\ref{ss:ne}. 
The approach we used in \cite{prima}, through the Smith-Ward linear functional, 
allows us to point out another numerical method of solving Problem B by means 
of minimization of linear functionals subject to semidefinite constraints, see 
Proposition~\ref{p:test}. Finally, in Subsection~\ref{ss:ccd} we show that, under
the commutation assumptions, the semidefinite problem that we obtain here turns 
into a linear programming problem, hence explaining the results in \cite{LiPoon}
from this perspective as well.

If a more restrictive case of 
Problem A is considered, for example, when, in addition to the 
requirement that the 
solutions $\phi$ should be completely positive maps, one imposes the condition of
trace preserving, that is, $\phi$ must be a quantum channel, we note that this
version of Problem A leads to the same type of Problem B since, 
the additional trace preserving constrained is just another linear constrained. This
shows that all the numerical techniques that we describe in this article 
can be successfully applied to interpolation of quantum 
channels that take prescribed values on given data, without any 
essential modification.

Let us  mention that the positive semidefinite approach to Problem A has been 
previously observed also in \cite{DArianoLoPresti}, \cite{Goncalves}, and
 \cite{hei}, in different formulation. With respect to these articles, our present topics, 
like subsections~\ref{ss:scmt}, \ref{ss:ne}, and Proposition~\ref{p:test} (b), 
are new.

\section{Main results}
\label{s:mr}

Consider then the interpolation problem (\ref{e:princ}) for the 
given matrices $A_\nu \in M_n$ and $B_\nu \in M_k$ where
$\nu =1,\ldots ,N$.
 Firstly, we will translate it below  in terms of Choi matrices.

\subsection{The Choi matrix.} \label{ss:tcm}
Let $\{e_i^{(n)}\}_{i=1}^n$ be the canonical basis of $\CC^n$ ($n\in\NN$).
As usual, the linear space $M_{n,k}$ of all $n\times k$ matrices is identified 
with the vector space $\cB(\CC^k,\CC^n)$  of all linear transformations 
$\CC^k\ra \CC^n$ ($n,k\in\NN$).  Let 
$\{E_{i,j}^{(n,k)}\mid i=1,\ldots,n,\ 
j=1,\ldots,k\}\subset M_{n,k}$ be the matrix units of size $n\times k$, namely $E_{i,j}^{(n,k)}$ is the $n\times k$
matrix with all entries $0$ except for the $(i,j)$-th entry which is $1$.  
If $n=k$, we note  $E^{(k)}_{i,j}=E^{(k,k)}_{i,j}$. 

Given any linear map $\phi\colon M_n\ra M_k$, define a $kn\times kn$ matrix 
$\Phi_\phi$ by
\begin{equation}\label{e:choi} \Phi_\phi =[\phi(E_{l,m}^{(n)})]_{l,m=1}^n.
\end{equation}
In what follows we describe the mapping
$\phi\mapsto \Phi_\phi$, that appears  in J.~de~Pillis \cite{Pillis}, 
A.~Jamio\l kowski \cite{Jamiolkowski}, R.D.~Hill \cite{Hill}, and M.D.~Choi 
\cite{Choi}.
Use the lexicographic reindexing of 
$\{E_{i,j}^{(n,k)}\mid i=1,\ldots,n,\ 
j=1,\ldots,k\}$, more precisely
\begin{equation}\label{e:Es} 
\bigl(
E^{(n,k)}_{1,1},\ldots,E^{(n,k)}_{1,k},E^{(n,k)}_{2,1},\ldots,E^{(n,k)}_{2,k},
\ldots, E^{(n,k)}_{n,1},\ldots,E^{(n,k)}_{n,k}\bigr) 
=\bigl(\mathcal{E}_1,\mathcal{E}_2,....,\mathcal{E}_{nk} \bigr)
\end{equation}
Another form of this reindexing is 
\begin{equation}\label{reindexing} \cE_r=E_{i,j}^{(n,k)}\mbox{ where }
r=(j-1)k+i,\mbox{ for all }i=1,\ldots,n,\ j=1,\ldots,k.
\end{equation}
The formula
\begin{equation}\label{e:prima} \phi_{(i-1)k+m,(j-1)k+l}
=\langle \phi(E_{l,m}^{(n)})e_{i}^{(j)},e_m^{(k)}\rangle,\quad i,j=1,\ldots,k,\
l,m=1,\ldots,n,
\end{equation} and its inverse
\begin{equation}\label{e:adoua} 
\phi(C)=\sum_{r,s}^{nk} \phi_{r,s} \cE_r^*C\cE_s,\quad C\in 
M_{n},
\end{equation} establish a linear  bijection 
\begin{equation}\label{e:defin}
\cB(M_n,M_k)\ni\phi\mapsto \Phi_\phi=[\phi_{r,s}]_{r,s=1}^{nk}\in M_{nk}
\end{equation}  
 that induces an affine, order preserving bijection
\begin{equation}\label{e:cpmp}
\mathrm{CP}(M_n,M_k)\ni\phi\mapsto\Phi_\phi\in M_{nk}^+.
\end{equation}
Given $\phi\in\cB(M_n,M_k)$ we call the matrix $\Phi =\Phi_\phi$ as in \eqref{e:choi} 
\emph{the Choi matrix} of $\phi$.

\subsection{Equivalent setting of the problem.}\label{ss:esp}
 Following 
the notation in (\ref{e:choi}) -- (\ref{e:cpmp}), the Choi matrix $\Phi =\Phi_\varphi$ of any solution $\varphi :M_n \to M_k$ to (\ref{e:princ})  is given by
$\Phi =[\varphi_{rs}]_{r,s}$ where the indices $r,\, s$  are couples $r\equiv (i,m)$, $s\equiv (j,l)$ for $i,j=1,\ldots ,n$,  $l,m=1,\ldots ,k$ and $$\varphi_{rs}
=\langle \varphi (E_{ij}^{(n)})e_{l}^{(k)},e_{m}^{(k)}\rangle .$$ 
Since $r,\, s$ run  the cartesian product $\{ 1,\ldots ,n\}\times \{ 1,\ldots , k\}$
 consisting of $nk$ elements that we  order lexicographically, we can simply write $\Phi \in M_{nk}$ and   $r,s=1,\ldots ,nk$. 
Set
$$
A_\nu =[a_{\nu ,i,j}]_{i,j=1}^n =\sum_{i,j=1}^n a_{\nu ,i,j} E^{(n)}_{ij}
 $$
and 
$$
B_\nu =[b_{\nu ,m,l}]_{m,l=1}^ k =\sum_{m,l=1}^k b_{\nu ,m,l} E^{(k)}_{ml}.
 $$
Equate the $(m,l)$ entries in the matrix equality $\varphi (A_\nu) =B_\nu$ to get $$\langle \varphi (A_\nu )e_{l}^{(k)},e_{m}^{(k)}\rangle =b_{\nu, m, l}, $$ that is,
$$
\langle \varphi (\sum_{i,j=1}^n a_{\nu ,i,j}E^{(n)}_{ij} )e_{l}^{(k)},e_{m}^{(k)}\rangle =b_{\nu, m,l}
$$
and so 
\begin{equation}\label{simetric}
\sum_{i,j=1}^n a_{\nu ,i,j}\varphi_{(i,m)(j,l)}=b_{\nu ,m,l}.
\end{equation}
Write the equality from above using Kronecker's symbol $\delta_{pq}$ ($=1$ if $p=q$ and $0$ if $p\not =q$) in the form
$$
\sum_{(j,l'),(i,m')}a_{\nu ,i,j}\delta_{l'l}\delta_{m'm}\varphi_{(i,m')(j,l')}=b_{\nu ,m,l}
$$
where  $(j,l')$ and $ (i,m')$ run   $\{ 1,\ldots ,n\} \times \{ 1,\ldots ,k \}$,
then set
\begin{equation}\label{aiota}
c(\nu ,m,l)_{(j,l')(i,m')}:=a_{\nu ,i,j}\delta_{l'l}\delta_{m'm}=(A_{\nu}^\tau )_{j,i}(E_{lm}^{(k)})_{l',m'}=(A_{\nu}^\tau \otimes E_{lm}^{(k)})_{(j,l'),(i,m')}
\end{equation}
where $A\mapsto A^\tau$ denotes the transposition and define 
\begin{equation}\label{aiota2}
C(\nu ,m,l)=\big[ c(\nu ,m,l)_{(j,l')(i,m')}\big]_{(j,l')(i,m')}=A_{\nu}^{\tau}\otimes E^{(k)}_{lm}
\end{equation}
that can be  represented as an $nk\times nk$ matrix  $C(\nu ,m,l)\in M_{nk}$
 \begin{equation}\label{dtp}C(\nu ,m,l)\equiv A_{\nu}^\tau \otimes   E_{lm}^{(k)}\equiv \big[     a_{\nu ,  j,i  }E_{lm}^{(k)}  \big]_{i,j=1}^n  \end{equation}
  via the linear, isometric, order-preserving isomorphisms $$ M_{nk}\equiv M_n \otimes M_k \equiv M_n (M_k ).$$
 We obtain, using (\ref{simetric}) -- (\ref{aiota2}), the equality
$$
\sum_{(j,l'),(i,m')}c(\nu ,m,l)_{(j,l')(i,m')}\varphi_{(i,m')(j,l')}=b_{\nu ,m,l},
$$
namley
$$\tr\, (C(\nu ,m,l)\Phi )=b_{\nu ,m,l},
$$
that by  (\ref{dtp}) we can  write as well 
\begin{equation}\label{tens}
\tr\, [(A_{\nu}^\tau \otimes   E_{lm}^{(k)})\Phi ]=b_{\nu , m,l},
\end{equation}
which  actually is a particular application of the next formula, easily checked following the lines from above, 
letting $A=[a_{ij}]_{i,j=1}^n =\sum_{i,j=1}^n a_{ij}E_{ij}^{(n)}$ etc:
\begin{equation}\label{formm}
\varphi (A)=\big[ \tr\, [(A^\tau \otimes   E_{lm}^{(k)}){\Phi} ]\big]_{m,l=1}^k   =
\big[ \tr\, [(A \otimes   E_{lm}^{(k)}){\Phi^\tau} ]\big]_{m,l=1}^k 
 \, \mbox{ }\mbox{ }\mbox{ }(A\in M_n ).
\end{equation}
 Note that  we have  as well the formula $\varphi (A)=   [\tr\, [(E_{lm}^{(k)}\otimes A)D_{\varphi}^* ]_{m,l=1}^k$ where $D_\varphi$ denotes the density matrix \cite{prima},  for which we also omit the details.
Conditions (\ref{e:prima}) on $\varphi$ are thus equivalent to the equations (\ref{tens}) from 
above concerning $\Phi$, via the formulas (\ref{aiota}),  (\ref{aiota2}) and (\ref{e:prima}), (\ref{e:defin}). 
Denote by $\iota =(\nu ,m,l)$ the generic triple consisting of arbitrary $\nu =1,\ldots ,N$ and $m,l=1,\ldots ,k$.  Thus $\iota$ runs a set of $q:=Nk^2$ elements. We may   write
$\iota =1,\ldots ,q$. Set also $p=nk$. Write
$C(\iota )=C(\nu ,m,l)$ ($\in M_{p}$)
and  $b_\iota = b_{\nu, m,l}$. Via (\ref{e:cpmp}),
 Problem A takes then the form 
from below.
\vspace{3 mm}

\noindent {\bf Problem B} {\it Given  $C(\iota )\in M_p$ and numbers $b_\iota$ ($1\!\leq\!\iota \!\leq\! q$), find  $X \in M_p$, $X \geq 0$, such that
\begin{equation}\label{transl}
\tr\, (C(\iota )X)=b_\iota
\mbox{ }\mbox{  } \mbox{for all }\iota=1,\ldots,q .
\end{equation}
}
\vspace{3 mm}

\noindent Thus, the solvability of Problem  A leads to the rather known topic of finding  
positive semidefinite matrices subject to  linear affine conditions and, in particular, 
establish whether such matrices do exist.
These questions often occur and are dealt with by reliable numerical methods in the  
semidefinite programming, a few elements of which we sketch in what follows.

In addition, a more restrictive case of Problem A is when, in addition to the 
requirement that the 
solutions $\phi$ should be completely positive maps, one imposes the condition of
trace preserving, that is, $\phi$ must be a quantum channel. However, this
version of Problem A leads to the same type of Problem B since, 
the additional trace preserving constrained is just another linear constrained. This 
shows that Problem B can be successfully applied to interpolation of quantum 
channels that take prescribed values on  given data, as well.

\subsection{Solutions by means of semidefinite programming.}\label{ss:smsp}
Firstly, using
 $\tr\, (c^* )=\overline{\tr\, (c)}$, $\tr\, (cd)=\tr\, (dc)$ and writing equation (\ref{transl}) in terms of $C(\iota )+C(\iota )^*$  and
 $i(C(\iota )-C(\iota )^* )$  we can 
 asume  all  matrices $C(\iota )$ to be selfadjoint. 
We can  suppose, without loss of generality, 
that they are linearly independent over $\mathbb{R}$. 
 {\it Semidefinite programming} is concerned with 
minimization of linear functionals  subject to the constraint that an 
affine combination of selfadjoint matrices is positive semidefinite: see
in this sense \cite{Barvinok},  \cite{BoydGhaoui}, \cite{LundquistJohnson}, 
\cite{NesterovNemirovsky}, \cite{VanderbergheBoyd}, also 
\cite{BdGhFrBl}, \cite{GolubVanLoan}.  
Roughly speaking, one sets $$a(x)=\sum_\iota x_\iota C(\iota )+a_0$$ for the given 
$C(\iota )$ and a  selfadjoint matrix $a_0$ (that can be suitably chosen, here). Define then
$$
p^* =\inf_x \{ \sum_\iota b_\iota x_\iota  \, :\, a(x)\geq 0\}
$$
and
$$
q^* =\inf_X \{ -\tr\, (a_0 X): X\geq 0, \tr\, (C(\iota )X)=b_\iota\, \forall \iota \} .
$$
A problem dual to (\ref{transl}) occurs now with respect to $p^*$, namely to establish if there exist positive definite matrices of the form $a(x)$. Standard algorithms exist to this aim, based on maximizing the minimal eigenvalue of $a(x)$ in the variables $x=(x_\iota )_\iota$, or on  interior point methods using
 barrier functions \cite{NesterovNemirovsky}. 
In the case when either (\ref{transl}) has solutions $X>0$, or the dual problem has solutions $x$ with $a(x)>0$,
we have $$p^* =q^* ,$$ see for instance \cite{NesterovNemirovsky}, \cite{VanderbergheBoyd}. If both conditions hold, the optimal sets for $p^*$ and $q^*$ are nonempty. In this case for every $\lambda \in (p^* ,\overline{p})$ where $\overline{p}=\sup_x \{ \sum_\iota b_\iota x_\iota :a(x)>0\}$ there is a unique vector $x^* =(x_{\iota}^* )_\iota $, the {\it analytic center} of this linear matrix inequality, such that $$a(x^* )>0, \mbox{ }\mbox{  }\sum_\iota b_\iota x^{*}_\iota =\lambda$$ and $x^*$ minimizes the {\it logarithmic 
 barrier function} $$\ln \det a(x)^{-1}$$ over all $x$ with $\sum_\iota b_\iota x_\iota$ and $a(x)>0$. It follows by the Lagrange multipliers method that  $$\tr\, (C(\iota )a(x^* )^{-1})=\lambda b_\iota \, \mbox{ }\forall \,
\iota ,$$ which gives a solution  $X=X_*$ of
(\ref{transl}), namely 
\begin{equation}\label{sdpt}X_* =\lambda^{-1}a(x^* )^{-1}.\end{equation}
These techniques  provide then a  method to find solutions of the form (\ref{sdpt}) to Problem B.

\subsection{Solutions via a convex minimization technique.}\label{ss:scmt}
We present another way to obtain   particular solutions  to Problems A, B, based on the minimization 
of a certain convex function, see  \cite{ca}. 
Suppose that $C(\iota )$ are selfadjoint and linearly independent.
Define   the function $V$ of $q$ real variables $x=(x_\iota )_{\iota =1}^q$ by
\begin{equation}\label{vv}
V(x)=\tr\, \big( e^{\sum_{\iota =1}^q x_\iota C(\iota )} \big)
-\sum_\iota x_\iota b_\iota .\end{equation}
 Then $V$ is smooth, strictly convex and has  strictly positive definite Hessian \cite{ca}. Hence whenever it has some critical point this is unique,
 and necessarily a point of minimum. Generally we may have also an unattained infimum $\inf V> -\infty$, 
or $\inf V=-\infty $.
 The following characterization of the existence of the solutions $X>0$ to 
 Problem B holds.

\begin{theorem}\label{minimizare}{\rm \cite{ca}}.
The system of equations (\ref{transl}) admits solutions $X>0$ if and only if the function $V$ defined by (\ref{vv}) has a critical point (of minimum), that is, if and only if $\lim_{\| x\| \to \infty }V(x)=+\infty$.
 In this case, (\ref{transl}) has also the (positive) particular solution
\begin{equation}\label{exp}
X_0 =e^{\sum_\iota x^{0}_\iota C(\iota )}
\end{equation}
where $x^0 =(x_{\iota}^0 )_\iota$ is the  critical point of $V$.
\end{theorem}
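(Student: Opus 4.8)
The plan is to prove the chain of implications ``$V$ has a critical point'' $\Rightarrow$ ``(\ref{transl}) has a solution $X>0$'' $\Rightarrow$ ``$V$ is coercive'' $\Rightarrow$ ``$V$ has a critical point'', together with the standalone equivalence between having a critical point and coercivity. Throughout I write $A(x)=\sum_{\iota=1}^q x_\iota C(\iota)$, a selfadjoint $p\times p$ matrix depending $\RR$-linearly on $x$, and I freely use that $V$ is smooth and strictly convex with positive definite Hessian, as recorded just before the statement.

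First I would compute the gradient of $V$. Using the standard identity $\frac{\de}{\de t}\tr\,(e^{M+tN})=\tr\,(Ne^M)$ (a consequence of cyclicity of the trace applied to the Duhamel formula), one obtains $\partial V/\partial x_\iota=\tr\,(C(\iota)e^{A(x)})-b_\iota$. Hence $x^0$ is a critical point if and only if $\tr\,(C(\iota)e^{A(x^0)})=b_\iota$ for every $\iota$; and since $A(x^0)$ is selfadjoint, $X_0=e^{A(x^0)}$ is positive definite and solves (\ref{transl}). This disposes of the first implication and simultaneously produces the explicit solution (\ref{exp}).

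Next I would settle the equivalence between coercivity and existence of a critical point. If $\lim_{\|x\|\to\infty}V=+\infty$, the sublevel sets of the continuous function $V$ are compact, so $V$ attains a global minimum, which by smoothness is a critical point. Conversely, if $x^0$ is a critical point, then by strict convexity it is the unique minimum; were $V$ not coercive, there would exist $\|z_n\|\to\infty$ with $V(z_n)$ bounded, and convexity on the segments $[x^0,z_n]$ would force $V$ to remain bounded along an entire ray issuing from $x^0$, contradicting the fact that a strictly convex function is unbounded above on every ray from its minimum (there $g(t)=V(x^0+td)$ satisfies $g'(0)=0$ and $g'$ strictly increasing, so $g(t)\to+\infty$).

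The main obstacle is the implication ``a solution $X>0$ exists $\Rightarrow$ $V$ is coercive''. Here I would compute the recession function $V_\infty(d)=\lim_{t\to\infty}(V(x^0+td)-V(x^0))/t$, which for convex $V$ is independent of the base point and satisfies: coercivity $\iff V_\infty(d)>0$ for all $d\neq0$. If $A(d)$ has a positive eigenvalue, then $\tr\,(e^{A(x^0)+tA(d)})$ grows exponentially in $t$, whence $V_\infty(d)=+\infty$; if $A(d)\le0$, Weyl's inequality bounds every eigenvalue of $A(x^0)+tA(d)$ by $\lambda_{\max}(A(x^0))$, so the trace term stays bounded and $V_\infty(d)=-\langle b,d\rangle$. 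Thus coercivity reduces to proving $\langle b,d\rangle<0$ whenever $d\neq0$ and $A(d)\le0$. Given a solution $X>0$ I compute $\langle b,d\rangle=\sum_\iota d_\iota\tr\,(C(\iota)X)=\tr\,(A(d)X)=\tr\,(X^{1/2}A(d)X^{1/2})\le0$, with equality forcing $X^{1/2}A(d)X^{1/2}=0$, hence $A(d)=0$ since $X^{1/2}$ is invertible, hence $d=0$ by the assumed $\RR$-linear independence of the $C(\iota)$. As $d\neq0$ the inequality is strict, which yields coercivity and closes the cycle, establishing all the stated equivalences.
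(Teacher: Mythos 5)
Your proof is correct, but a direct comparison is not possible here: the paper does not prove Theorem~\ref{minimizare} at all, it simply quotes it from the reference \cite{ca}, so your argument fills in material the paper deliberately omits. Judged on its own, the proof is complete and follows the natural route. The gradient computation $\partial V/\partial x_\iota=\tr\,(C(\iota)e^{A(x)})-b_\iota$ agrees with the formula recorded in Remark~\ref{grad}, and it simultaneously gives the implication ``critical point $\Rightarrow$ solution $X>0$'' and the explicit formula \eqref{exp}. The equivalence of coercivity with existence of a critical point is handled soundly (compact sublevel sets in one direction; strict monotonicity of $g'$ along rays from $x^0$, via the positive definite Hessian, in the other). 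The genuinely nontrivial implication, ``solution $X>0$ $\Rightarrow$ coercivity,'' is where your recession-function analysis carries the weight: the dichotomy $A(d)\not\leq 0$ versus $A(d)\leq 0$ is exhaustive, the Weyl estimates are applied correctly, and the strict inequality $\langle b,d\rangle<0$ for $d\neq 0$ with $A(d)\leq 0$ uses precisely the two available hypotheses (invertibility of $X^{1/2}$ and $\RR$-linear independence of the $C(\iota)$). The only point to tighten is your appeal, without citation, to the standard fact that a finite convex function on $\RR^q$ is coercive if and only if its recession function is strictly positive off the origin; since that fact underpins the hardest implication, you should reference it explicitly (e.g.\ Rockafellar, \emph{Convex Analysis}, Theorem 8.4 and Corollary 8.7.1, or \cite{HrLm}).
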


\begin{remark}\label{grad}
{\rm The function $V$ given by (\ref{vv})  fulfills the conditions of application of the
  {\it method of the conjugate gradients} \cite{HrLm}. This yields,
 whenever problem (\ref{transl}) has solutions $X>0$, a minimizing sequence
of vectors $x=(x_\iota )_\iota$ that is convergent to the critical point $x^0$ of $V$
 and so provides  approximations $\widetilde{X}_0 =e^{\sum_{\iota}x_\iota C(\iota )}\, \approx X_0$ of the solution  
(\ref{exp}), see  [Example 12, Remarks 11, \cite{ca}]. 
Note that the gradient $\nabla V =(\partial V/\partial x_\iota )_{\iota =1}^q$ of $V$ is easily computed to this aim
by $$\frac{\partial V}{\partial x_\kappa }(x)=  \tr\, (C(\kappa ) e^{\sum_\iota x_\iota C(\iota )} ) -b_\kappa ,$$ see \cite{ca}. 
We remind also the existence of various versions of  Newton's method that can be used as well to approximate the critical point. Certain tests exist \cite{ca} also to check if there are no solutions $X\geq 0$ at all.} 
\end{remark}

\subsection{A Numerical Example.}\label{ss:ne}
We show  how
Theorem \ref{minimizare}  applies to Problems A, B. 
Suppose one wishes to find   $\varphi :M_2 \to M_2$ completely positive
such that $\varphi (A_\nu) =B_\nu$ ($\nu =1,2$) for
$
 A_1 =\left[ \begin{array}{cc}2 & 1 \\ 1& 0\end{array}\right] , A_2 =\left[ \begin{array}{cc}1 & 1\\ 1& 2\end{array}\right] $ and  $
B_1 =\left[ \begin{array}{cc}4 & 0 \\ 0 & 0\end{array}\right] , B_2 =\left[ \begin{array}{cc}3.5 & 1.5 \\ 1.5 & 2.5 \end{array}\right] 
$.
 Use to this aim the minimization method indicated by Remark \ref{grad}.   
Formulas (\ref{aiota}), (\ref{aiota2}) and (\ref{dtp}) provide the matrices $C({\iota})$ 
for $\iota =(\nu ,m,l)$ where $\nu ,m,l=1,2$.
Due to the symmetry  equation (\ref{simetric}) (or, equivalently, (\ref{transl})) is 
equivalent to $\sum_{j,i=1}^n \overline{a}_{\nu ,ji}\overline{\varphi}_{(j,l)(i,m)}=
\overline{b}_{\nu lm}$ (or $\tr\, (C(\iota )^* \, \Phi )=\overline{b}_\iota$), and so it is 
enough to consider (\ref{simetric}) for those couples $(m,l)$ with $m\leq l$. That is, 
for each $\nu =1,2$ we have 3 equations corresponding to $(m,l)=(1,1),(1,2),(2,2)$.
 The set
 $\{ 1,2\} \times \{ 1,2\}$ of indices $r,\, s$ like $(j,l), (j,l'),(i,m), (i,m') $  with $1\leq i,j
 \leq n\, $($=2$) and $1\leq m,m',l,l'\leq k\, $($=2$) from below is   ordered 
 lexicographically as $\{ (1,1),(1,2),(2,1),(2,2)\} \equiv \{ {\bf 1},{\bf 2},{\bf 3},{\bf 4}\} $. 
We represent the positive matrix $X=[y_{\alpha \beta }]_{\alpha , \beta \in \{ {\bf 1},{\bf 
2},{\bf 3},{\bf 4}\}}\equiv \Phi =[\varphi_{rs}]_{r,s}$ that we seek for as
$$X=   \left[ \! \begin{array}{cccc}
y_{\bf 11} & y_{\bf 12} & y_{\bf 13} & y_{\bf 14} \\
y_{\bf 21} & y_{\bf 22} & y_{\bf 23} & y_{\bf 24} \\
y_{\bf 31} & y_{\bf 32} & y_{\bf 33} & y_{\bf 34} \\
y_{\bf 41} & y_{\bf 42} & y_{\bf 43} & y_{\bf 44} 
\end{array}\!\right] \equiv \Phi =
 \left[ \! \begin{array}{cccc}
\!\varphi_{( 1, 1)(1 ,1 )} & \!\varphi_{(1 ,1 )(1 ,2 )} & \!\varphi_{(1 ,1 )(2 ,1 )} & \!\varphi_{(1 ,1 )(2 ,2 )} \\
\!\varphi_{( 1, 2)(1 ,1 )} & \!\varphi_{(1 ,2 )(1 ,2 )} & \!\varphi_{(1 ,2 )(2 ,1 )} & \!\varphi_{(1 ,2 )(2 ,2 )} \\
\!\varphi_{( 2, 1)(1 ,1 )} & \!\varphi_{(2 ,1 )(1 ,2 )} & \!\varphi_{(2 ,1 )(2 ,1 )} & \!\varphi_{(2 ,1 )(2 ,2 )} \\
\!\varphi_{( 2, 2)(1 ,1 )} & \!\varphi_{(2 ,2 )(1 ,2 )} & \!\varphi_{(2 ,2 )(2 ,1 )} & \!\varphi_{(2 ,2 )(2 ,2 )} 
\end{array}\!\right]
$$
and the given matrices $C({\nu ,m,l})$  as follows: $E_{11}=\left[ \begin{array}{cc}
1&0\\0&0\end{array}\right]$, $E_{21}=\left[ \begin{array}{cc}0&0\\1&0\end{array}
\right]$ etc and 
$$
C({1,1,1})\! =\! \left[ \begin{array}{cccc} \! c({1,1,1})_{( 1,1 )(1 ,1 )} & \! c({1,1,1})_{(1 ,1 )(1 ,2 )} & \! c({1,1,1})_{(1 ,1 )(2 ,1 )} & \! c({1,1,1})_{(1 ,1 )(2 ,2 )} \\
 \! c({1,1,1})_{(1 ,2 )(1 ,1 )} & \! c({1,1,1})_{( 1,2 )(1 ,2 )} & \! c({1,1,1})_{(1 ,2 )(2 ,1 )} & \! c({1,1,1})_{(1 ,2 )(2,2 )} \\
 \! c({1,1,1})_{(2 ,1 )(1 ,1 )} & \! c({1,1,1})_{( 2, 1)(1 ,2 )} & \! c({1,1,1})_{(2 ,1 )(2 ,1 )} & \! c({1,1,1})_{(2 ,1 )(2,2 )} \\
 \! c({1,1,1})_{(2 , 2)(1 ,1 )} &\! c({1,1,1})_{(2 ,2 )(1 ,2 )} &\! c({1,1,1})_{(2 ,2 )(2 ,1 )} &\! c({1,1,1})_{(2 ,2 )(2 ,2 )} 
\end{array}\! \right] =$$ 
$$
A_{1}^\tau \otimes E_{11}=\left[ \begin{array}{cc} a^{1}_{11}\left[ \begin{array}{cc}1&0\\ 0& 0\end{array}\right] &  a^{1}_{21}\left[ \begin{array}{cc}1&0\\ 0& 0\end{array}\right] \\
\mbox{ } & \mbox{ }\\
 a^{1}_{12}\left[ \begin{array}{cc}1&0\\ 0& 0\end{array}\right] &  a^{1}_{22}\left[ \begin{array}{cc}1&0\\ 0& 0\end{array}\right] \end{array}\right] =
\left[ \begin{array}{cccc} a_{11}^1 & 0& a_{21}^1 & 0\\
0&0&0&0\\
a^{1}_{12}&0&a_{22}^1 &0\\
0&0&0&0\end{array}\right] =
\left[ \begin{array}{cccc} 2 & 0& 1 & 0\\
0&0&0&0\\
1&0&0 &0\\
0&0&0&0\end{array}\right]
,$$ 
$
C({1,1,2})=A_{1}^\tau \otimes E_{21} =\left[\begin{array}{cccc}
0 & 0 & 0 & 0\\
2 & 0 & 1 & 0\\
0 & 0 & 0 & 0\\
1 & 0 & 0 & 0
\end{array}  \right] 
$ etc. 
 A numerical minimization of $V$ as in Remark \ref{grad} gave us the matrix $\widetilde{X}_0 \approx X_0$ from below
$$
\widetilde{X}_0  =\left[
\begin{array}{rrrr}
1.549937761 &  -0.1694804138 &  0.4499571618 &  0.4047411695 \\ 
-0.1694804138 &  0.1534277390 &  -0.06572393508 &  -0.1533566973\\
0.4499571618 &  -0.06572393508 &  0.5249880063 &  0.6652436210\\ 
0.4047411695 &  -0.1533566973 &  0.6652436210 &  1.326699194
\end{array}\right]
$$
which approximately satisfies the equations (\ref{transl}). 
Let $\varphi$ be the map whose Choi matrix $\Phi =\Phi_\varphi =[\varphi_{r,s}]_{r,s}$ is $\widetilde{X}_0 \equiv \Phi$. We got then
an approximate solution to our present particular case of problem (\ref{e:princ}), namely $
\varphi (A)=\,\left [\, \sum_{i,j=1}^2 \Phi_{(i,m)(j,l)}a_{ij}\, \right]_{m,l=1}^2 =\left[ \tr [(A^\tau \otimes E^{(k)}_{lm})\widetilde{X}_0 \right]_{m,l=1}^k  
$
for every $A=[a_{ij}]_{i,j=1}^2 \in M_2 $ , see formula (\ref{formm}). 
For instance, we have   \begin{multline}\varphi (A_1 )_{11}=\!  \tr\, (C(1,1,1)\widetilde{X}_{0})=2(\widetilde{X}_0 )_{11}+(\widetilde{X}_0 )_{13}+(\widetilde{X}_0 )_{31} =3.99978984  
\approx 4 =b_{(1,1,1)}=(B_{1})_{11} \mbox{,}\nonumber 
\\
\varphi (A_1 )_{12}=\tr \left[ (A_{1}^\tau \otimes E_{21}^{(2)})\widetilde{X}_0 \right] =\tr (C(1,1,2)\widetilde{X}_0 )=0.0000564069 \approx 0=b_{(1,1,2)}=(B_1 )_{12}
\mbox{ etc}\nonumber\end{multline}

Definitely, problems of more sizeable amount can be solved as well by using such  semidefinite programming (or related) 
methods \cite{NesterovNemirovsky}, \cite{VanderbergheBoyd}, see also \cite{borch} ,  \cite{KocvaraStingl}, allowing us to consider  larger $n,\, k,\, N$.

\begin{remark}{\rm In order to obtain an exact solution $X$, we can
project  $\widetilde{X}_0 $  onto the affine subspace defined by (\ref{transl}) by  a linear affine projection map $p$, then let
$$
X:=p\widetilde{X}_0   
$$ and use $\Phi :=X$ instead of $\widetilde{X}_0 $. Indeed, since $\widetilde{X}_0 \approx X_0$ then $X=p\widetilde{X}_0 \approx pX_0 =X_0 $ and so $X\approx  X_0 \,  >0$ which implies   $X>0$ if a sufficiently good approximation $\widetilde{X}_0 \approx X_0$ 
 was performed.}
\end{remark}

\begin{remark} {\rm
An interesting question is to reduce the number of operation elements in the representation (\ref{e:kraus}) of  $\varphi$, whenever possible.
This is equivalent to the minimization of the rank of  $X$. The case of one term for instance would correspond to  solutions $X\geq 0$  of rank one, namely to the existence of vectors $v\in \mathbb{C}^{nk}$ such that $\langle C(\iota ) v,v\rangle =b_\iota$ for all $\iota$. 
A first easy step to rank reduction  is to find the joint support $P$ of the matrices $C(\iota )$ ($: =C({\iota})^*$) and consider only solutions $X$ such that $X=PXP$, as follows.
 Set $K=\{ h\in \mathbb{C}^{nk}: C(\iota )h=0\, \forall \, \iota \}$. Let $P$ be the orthogonal projection onto $K^\bot $. Then $\tr\, (C(\iota )PXP)=
b_\iota$ for all $\iota$. Indeed, $C(\iota )=C(\iota )P$ and so $C(\iota )=C({\iota})^* =PC(\iota )=PC(\iota )$P, whence $\tr\, (C(\iota )X)=\tr\, (PC(\iota )P X)=\tr\, (C(\iota )PXP)$. Generally the question to verify if there exist solutions $X\geq 0$ of lower rank and find them is difficult.
 For certain possibilities of reducing  the rank of $X$ see, for instance, Section II.13
 in \cite{Barvinok}, or \cite{SadatiYousefi}.}
\end{remark}

\subsection{Characterization in terms of linear functionals.}\label{ss:ctlf}
By  Theorem 2.5 from \cite{prima} (see also Theorem 6.1 in \cite{Pau}, or \cite{SmithWard}), the 
solvability of (\ref{transl}) can be described  in terms of the linear functional

$$\sum_\iota x_\iota C(\iota ) \mapsto b_\iota x_\iota .$$ 
We recall  this result in the  form from below,   completed with a version (b) 
concerning the existence of strictly positive solutions; 
for the sake of completeness we sketch also  the proof.

\begin{proposition}\label{p:test}
Suppose that $C(\iota ) \in M_p$ ($\iota =1,\ldots ,q$) are selfadjoint,  linearly 
independent and their linear span contains strictly positive matrices. Then:

\emph{(a)} There exist solutions $X\geq 0$ of the system of equations (\ref{transl}) if
and only if

\noindent $\sum_\iota b_\iota x_\iota  \geq 0$ for all $(x_\iota )_{\iota}$ such that 
$\sum_\iota x_\iota C(\iota ) \geq 0$, namely, we have

$$\inf_{x\, :\, \sum_\iota x_\iota C(\iota ) \geq 0}\sum_\iota b_\iota x_\iota \geq 0.$$

\emph{(b)} There exist solutions $X> 0$ of the system of equations (\ref{transl}) if
and only if

\noindent $\sum_\iota b_\iota x_\iota  > 0$ for all $(x_\iota )_{\iota}\not =0$ such that $\sum_\iota x_\iota C(\iota ) \geq 0$, namely for any
norm  $\| \, \cdot\, \|$ 

$$\inf_{x\, :\, \sum_\iota x_\iota C(\iota ) \geq 0,\, \| x\| =1}\sum_\iota b_\iota x_\iota >0.$$
\end{proposition}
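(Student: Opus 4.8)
The plan is to read both parts as a conic-duality statement built on the self-duality of the positive semidefinite cone $M_p^+$ under the pairing $\tr(YX)$ (i.e. $\tr(YX)\geq 0$ for all $X\geq 0$ iff $Y\geq 0$). Write $S(x)=\sum_\iota x_\iota C(\iota)$ for $x=(x_\iota)_\iota\in\RR^q$ and $T(X)=(\tr(C(\iota)X))_\iota\in\RR^q$ for selfadjoint $X$; these are mutually adjoint, so $\sum_\iota x_\iota\tr(C(\iota)X)=\tr(S(x)X)$. Linear independence of the $C(\iota)$ makes $S$ injective, hence $T$ surjective. In this language, (\ref{transl}) has a solution $X\geq 0$ exactly when $b=(b_\iota)_\iota$ lies in the convex cone $\cK:=T(M_p^+)$, while the condition in (a) says $b$ lies in the dual cone of $\cC:=\{x:S(x)\geq 0\}$. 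The ``only if'' directions of (a) and (b) are immediate: if $T(X)=b$ then $\sum_\iota b_\iota x_\iota=\tr(S(x)X)$, which is $\geq 0$ when $X\geq 0$ and $S(x)\geq 0$, and is $>0$ when moreover $X>0$ and $S(x)\neq 0$, the latter holding for $x\neq 0$ by injectivity of $S$.

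For the ``if'' direction of (a) I first show $\cK=T(M_p^+)$ is closed; this is the one place where the hypothesis that the span of the $C(\iota)$ contains a strictly positive matrix is essential. Fix $x^0$ with $W:=S(x^0)>0$, and let $X_n\geq 0$ with $T(X_n)\to b$. I claim $\{X_n\}$ is bounded: otherwise, normalizing $Y_n=X_n/\|X_n\|$ and passing to a convergent subsequence $Y_n\to Y\geq 0$ with $\|Y\|=1$, one gets $T(Y)=\lim T(X_n)/\|X_n\|=0$, whence $\tr(WY)=\sum_\iota x^0_\iota\tr(C(\iota)Y)=0$; but $W>0$ and $0\neq Y\geq 0$ force $\tr(WY)>0$, a contradiction. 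A convergent subsequence of $\{X_n\}$ then limits to some $X\geq 0$ with $T(X)=b$, so $\cK$ is closed. Now if no solution existed, i.e. $b\notin\cK$, the separating hyperplane theorem would produce $x$ with $\sum_\iota b_\iota x_\iota<0\leq\tr(S(x)X)$ for all $X\geq 0$; self-duality of $M_p^+$ gives $S(x)\geq 0$, contradicting the hypothesis of (a).

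For (b) I note that the dual cone of $\cK$ is exactly $\cC$, since $\tr(S(x)X)\geq 0$ for all $X\geq 0$ iff $S(x)\geq 0$. The claim reduces to two facts. First, existence of a strictly positive solution is equivalent to $b\in\operatorname{int}\cK$: openness of the surjective map $T$ sends the open cone $M_p^{++}$ of strictly positive matrices into $\operatorname{int}\cK$, giving ``$\Rightarrow$'', while if $b\in\operatorname{int}\cK$ then $b-\varepsilon T(I)\in\cK=T(M_p^+)$ for small $\varepsilon>0$, so $b=T(X_1+\varepsilon I)$ with $X_1+\varepsilon I>0$. Second, the standard description of the interior of a closed convex cone gives $b\in\operatorname{int}\cK$ iff $\sum_\iota b_\iota x_\iota>0$ for every $x\in\cC\setminus\{0\}$, which is precisely condition (b): for ``$\Rightarrow$'' one uses $b-\varepsilon x\in\cK$ and $x\in\cC=\cK^*$, and for ``$\Leftarrow$'' one uses compactness of $\{x\in\cC:\|x\|=1\}$ to extract a uniform bound $\delta=\inf_{\|x\|=1,\,S(x)\geq 0}\sum_\iota b_\iota x_\iota>0$, then the bipolar theorem (valid since $\cK$ is closed) to conclude $b+y\in\cK$ for all small $y$.

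I expect the closedness of $\cK=T(M_p^+)$ to be the main obstacle: linear images of the positive cone need not be closed in general, and it is exactly the Slater-type assumption of a strictly positive element in the span of the $C(\iota)$ that rescues the separation argument in (a) and, through the bipolar step, the interior characterization in (b).
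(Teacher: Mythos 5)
Your proof is correct, but it is not the paper's argument; the two are essentially dual to each other. The paper works inside the matrix space $M_p^s$ of selfadjoint matrices: the functional $l:\sum_\iota x_\iota C(\iota)\mapsto\sum_\iota b_\iota x_\iota$, well defined by linear independence and nonnegative on the intersection of the span of the $C(\iota)$ with the positive semidefinite cone, is extended by Mazur's theorem to a positive functional $L$ on all of $M_p^s$ (this is where the paper spends the hypothesis of a strictly positive matrix in the span: the span must meet the interior of the cone for the extension theorem to apply); the solution is then the Riesz representative $X$ with $L=\tr(X\,\cdot\,)$, whose positivity is read off from rank-one tests, and the converse is the same trace computation you give. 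For (b) the paper does not give a full argument: it invokes, with a citation, a strict-positivity analogue of the extension lemma. You instead dualize and work in $\RR^q$: you form the image cone $\cK=T(M_p^+)$, prove it is closed using the Slater-type hypothesis --- a genuine point, since linear images of the positive semidefinite cone need not be closed, and this is exactly where your use of the hypothesis replaces the paper's --- then obtain (a) by separating $b$ from the closed cone $\cK$, and (b) from the characterization of $\operatorname{int}\cK$ by strict positivity against $\cK^*\setminus\{0\}$ together with compactness of the unit sphere in $\cK^*$ and the bipolar theorem. What your route buys: it is fully self-contained in finite-dimensional convexity (in particular part (b) is actually proved rather than deferred to a reference), and it isolates the precise role of the hypothesis as a closedness condition on $T(M_p^+)$. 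What the paper's route buys: it is shorter modulo the quoted extension theorems, and it stays within the positive-functional-extension language (Arveson, Smith--Ward) on which the rest of the paper's framework is built.
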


\begin{proof}
(a) Assume that $\inf_{x\, :\, \sum_\iota x_\iota C(\iota ) \geq 0}\sum_\iota b_\iota x_\iota \geq 0$. The intersection of the closed convex cone of all positive semidefinite $p\times p$ matrices and the linear span $S$ of the $C(\iota )$'s contains a point that is interior to the cone, namely a positive matrix. The linear functional $l\, :\, \sum_\iota x_\iota C(\iota ) \mapsto \sum_\iota b_\iota x_\iota$ is well defined, and $ \geq 0$ on this intersection. By Mazur's theorem, see for instance \cite{AliprantisTourky}, \cite{Kantorovich}, it has a linear extension $L$ to the space $M_{p}^s$ of all selfadjoint matrices in $M_p$, such that $LY\geq 0$ for all $Y\geq 0$ in $M_{p}^s$. Now $L$ has the form $LY=\tr\, (XY) $ for some $X\in M_{p}^s$.  Letting $Y=\langle \,\cdot \, ,h\rangle h$ for an arbitrary vector $h\in \mathbb{C}^p$ gives $\langle Xh,h\rangle \geq 0$. Hence $X\geq 0$. Since  $L|_S =l$, for every $\iota $ we have $C(\iota ) \in S$ and $\tr\, (C(\iota ) X)=LC(\iota ) =l C(\iota ) =b_\iota $.
Conversely, suppose that  there exists an $X\geq 0$ such that $\tr\, (C(\iota ) X)=b_\iota $ for all $\iota$. Then for every $(x_\iota )_\iota$ such that $\sum_\iota x_\iota C(\iota ) \geq 0$, we have $\sum_\iota b_\iota x_\iota =\sum_\iota \tr\, (C(\iota ) X)x_\iota =\tr\, (X\sum_\iota x_\iota C(\iota ) )=\tr\, (X^{1/2}\sum_\iota x_\iota C(\iota ) X^{1/2})\geq 0$.

(b) Assume that $\inf_{x\, :\, \sum_\iota x_\iota C(\iota ) \geq 0, \, \| x\| =1}\sum_\iota 
b_\iota x_\iota > 0$. We proceed as above, except we need the following fact: given 
a finite dimensional real space $M$, a linear subspace $S$ and a closed convex 
cone $C\subset M$ such that $C\cap (-C)=\{ 0\}$,   any linear functional $l$ on $S$ 
such that $ls>0$ for all $s\not =0$ from $S\cap C$ has a linear extension $L$ to $M$ 
such that $Lm>0$ for all $m\not =0$ from $C$.
This is rather a known consequence of the Hahn-Banach, Mazur - type theorems, 
see for instance \cite{rh}.  
The necessity of the condition follows as in the case (a). 
\end{proof}

\subsection{The case of commutative data}\label{ss:ccd}
As mentioned before, Problem A was raised in  \cite{LiPoon} where the  commutative 
case
was proven to be  equivalent   to a linear programming problem, concerned with 
solving systems in nonnegative  variables. Our present approach  allows us to put 
their result into a new perspective.

Firstly, by the commutativity assumption we can suppose, without loss of generality,  
that all matrices $A_\nu ,B_\nu$ are diagonal.
For any matrix $u=[u_{ij}]_{i,j}$, set $\tilde{u}=[u_{ij}\delta_{ij}]_{i,j}$. 
The Proposition from below shows that in the equations
(\ref{tens}) we can replace then any positive semidefinite solution
$X $  by the (positive semidefinite) diagonal matrix 
$\tilde{X}=\mathrm{diag} (x_1 ,\ldots ,x_q )$, which leads to the simpler problem of finding 
some numbers $x_i \geq 0$ satisfying a linear system of 
nonhomogeneous  equations.

\begin{proposition}\label{p:c}
Let the matrices $A_\nu$, $B_\nu$  be diagonal. If $X\geq 0$ satisfies  (\ref{tens}), that is,
$\tr\, [(A_\nu \otimes   E_{ml}^{(k)}){X} ]=B_{\nu , lm}$, then 
$\widetilde{X}$ also is a solution to these equations, namely we have $\tr\, [(A_\nu \otimes   E_{ml}^{(k)})\widetilde{X} ]=B_{\nu , lm}$. 
\end{proposition}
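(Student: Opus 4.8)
The plan is to establish the identity by a direct computation of the trace, using the explicit entries of the tensor product $A_\nu \otimes E_{ml}^{(k)}$ together with the diagonality hypotheses. First I would record, in the lexicographically ordered index set $\{1,\ldots,n\}\times\{1,\ldots,k\}$, the entries $(A_\nu \otimes E_{ml}^{(k)})_{(i,a),(j,b)} = (A_\nu)_{ij}(E_{ml}^{(k)})_{ab}$. Since $A_\nu$ is diagonal we have $(A_\nu)_{ij}=0$ for $i\neq j$, while $(E_{ml}^{(k)})_{ab}=\delta_{am}\delta_{bl}$, so that $(A_\nu \otimes E_{ml}^{(k)})_{(i,a),(j,b)} = (A_\nu)_{ii}\,\delta_{ij}\delta_{am}\delta_{bl}$.

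Next I would expand $\tr\,[(A_\nu \otimes E_{ml}^{(k)})X] = \sum_{(i,a),(j,b)} (A_\nu \otimes E_{ml}^{(k)})_{(i,a),(j,b)}\, X_{(j,b),(i,a)}$. The three Kronecker deltas collapse the summation to $j=i$, $a=m$, $b=l$, leaving the single sum $\sum_{i=1}^n (A_\nu)_{ii}\, X_{(i,l),(i,m)}$. The decisive observation is that diagonality of $A_\nu$ forces the two surviving indices to share the same first ($M_n$) coordinate $i$; hence only the entries $X_{(i,l),(i,m)}$ of $X$ ever enter the equation.

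I would then split according to whether $l=m$. If $l=m$, the relevant entries $X_{(i,m),(i,m)}$ lie on the main diagonal of $X$ and are therefore unchanged under the passage to $\widetilde{X}$, so $\tr\,[(A_\nu \otimes E_{mm}^{(k)})\widetilde{X}] = \sum_i (A_\nu)_{ii}\widetilde{X}_{(i,m),(i,m)} = \sum_i (A_\nu)_{ii} X_{(i,m),(i,m)} = \tr\,[(A_\nu \otimes E_{mm}^{(k)})X] = B_{\nu,mm}$. If $l\neq m$, then $(i,l)\neq(i,m)$, so $\widetilde{X}_{(i,l),(i,m)}=0$ for every $i$ and the trace against $\widetilde{X}$ vanishes; since $B_\nu$ is diagonal this equals $B_{\nu,lm}=0$, the value already attained by $X$. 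In both cases $\widetilde{X}$ satisfies the same equation as $X$.

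There is no genuine obstacle here; the argument is pure bookkeeping, and the only point demanding care is keeping the lexicographic $(M_n,M_k)$ index convention consistent, so as to identify correctly which entries of $X$ survive in the trace --- precisely those diagonal in the $M_n$-factor and possibly off-diagonal in the $M_k$-factor. Finally I would remark that $\widetilde{X}\geq 0$, being the diagonal part of a positive semidefinite matrix and hence having nonnegative diagonal entries, so that $\widetilde{X}=\mathrm{diag}(x_1,\ldots,x_q)$ is a bona fide solution of Problem~B of the simple diagonal form announced before the statement.
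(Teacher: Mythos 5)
Your proof is correct, but it follows a genuinely different route from the paper's. The paper's argument is tensor-algebraic: it decomposes $X=\sum_\mu Y_\mu\otimes Z_\mu$ with $Y_\mu\in M_n$, $Z_\mu\in M_k$, uses the identity $\widetilde{u\otimes v}=\tilde{u}\otimes\tilde{v}$ to get $\widetilde{X}=\sum_\mu \widetilde{Y}_\mu\otimes\widetilde{Z}_\mu$, and then handles the two cases via $\tr(u\otimes v)=\tr(u)\,\tr(v)$ together with $\tr\,(E_{lm}^{(k)}\widetilde{Z}_\mu)=0$ for $l\neq m$, and via $\widetilde{A}_\nu=A_\nu$ and $\tr(\tilde{u}\tilde{v})=\tr(\tilde{u}v)$ for $l=m$. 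You instead compute the trace entry by entry and observe that diagonality of $A_\nu$ collapses it to $\sum_{i=1}^n (A_\nu)_{ii}\,X_{(i,l),(i,m)}$, i.e.\ only entries diagonal in the $M_n$-factor survive; for $l=m$ these are genuine diagonal entries of $X$ (unchanged by the tilde), while for $l\neq m$ they are off-diagonal (killed by the tilde) and both sides vanish because $B_\nu$ is diagonal. Your route is more elementary and self-contained --- it needs no tensor decomposition of $X$ and no auxiliary identities about the tilde operation --- and it makes explicit exactly which entries of $X$ enter each constraint, which is arguably more informative; the paper's route is shorter to write once those identities are granted and stays at the level of the tensor formalism used throughout Section~2. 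Your closing remark that $\widetilde{X}\geq 0$ (nonnegative diagonal entries of a positive semidefinite matrix) is not part of the proposition itself but matches the claim made in the paragraph preceding it in the paper, so it is a welcome addition.
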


\begin{proof}  Represent $X\in M_{nk}\equiv M_n \otimes M_k$ as $X=\sum_\mu Y_
\mu \otimes Z_\mu $ with $Y_\mu \in M_n$ and $Z_\mu \in M_k$. Using the easily 
checked formula $\widetilde{u\otimes v}=\tilde{u}\otimes \tilde{v}$,
we obtain $\tilde{X}=\sum_\mu \tilde{Y}_\mu \otimes \tilde{Z}_\mu $.
Hence, the  equality in the conclusion holds for $l\not =m$
 by inserting $\widetilde{X}$ in the left hand side, then using the formula 
 $\tr (u\otimes v)=\tr( u) \, \tr(v)$ and the equalities  $\tr\, (E_{lm}^{(k)}\tilde{Z}_\mu )
 =0$, $B_{\nu , lm}=0$.
To prove it also for $l=m$, 
use again $\tr (u\otimes v)=\tr\, u \, \tr\, v$
to write the desired conclusion in the form  $\sum_\mu \tr\, 
(A_\nu \tilde{Y}_\mu )Z_{\mu ,ll}=B_{\nu , ll}$. 
This is equivalent, by means of the equalities
$\tilde{A}_\nu =A_\nu$ and  the formula $\tr\, (\tilde{u}\tilde{v})= \tr (\tilde{u}v)$, to 
 $\tr\, [(A_\nu \otimes E_{ll}^{(k)})\sum_\mu {Y}_\mu \otimes {Z}_\mu ]=B_{\nu , ll}$, that is the case $l=m$ of (\ref{tens}) and so holds true by hypotheses. \end{proof}

\end{document}